\newtheorem{thm}{Theorem}[section]
\newtheorem{prop}[thm]{Proposition}
\newtheorem{cor}[thm]{Corollary}
\newtheorem{rem}[thm]{Remark}
\numberwithin{equation}{section}
\def\GL{{\operatorname {GL}}}
\def\SL{{\operatorname{SL}}}
\def\PSL{{\operatorname {PSL}}}
\def\PGL{{\operatorname{PGL}}}
\def\Im{{\operatorname {Im}}}
\def\geq{\geqslant}
\def\ge{\geq}
\def\1{{\bold 1}}
\renewcommand{\a}{\alpha}
\newcommand{\f}{\varphi}
\newcommand{\s}{\sigma}
\newcommand{\CC}{\mathbb{C}}
\newcommand{\HH}{\mathbb{H}}
\newcommand{\RR}{\mathbb{R}}
\newcommand{\ZZ}{\mathbb{Z}}
\newcommand{\Res}{\operatorname{Res}}
\title[A remark on the existence of equivariant functions]{A remark on the existence of equivariant functions}
\author{Shingo Sugiyama}
\address{
	Faculty of Mathematics and Physics,
Institute of Science and Engineering,
Kanazawa University,
Kakumamachi, Kanazawa, Ishikawa, 920-1192, Japan
}
\email{s-sugiyama@se.kanazawa-u.ac.jp}
\subjclass[2020]{Primary 11F03; Secondary 34M05,
11F12}
\keywords{
Equivariant functions, vector-valued automorphic forms,
Schwarzian derivatives}
\begin{document}

\begin{abstract}
Let $\Gamma$ be a Fuchsian group
in $\SL_2(\RR)$.
In this note, we discuss the existence of $\rho$-equivariant functions for a two-dimensional representation $\rho$ of $\Gamma$.
This assertion was first stated by Saber and Sebbar in 2020, and
this note partially fills a gap of their statement by proving the assertion for a certain class of 
Fuchsian groups such as conjugates of subgroups of $\SL_2(\ZZ)$.
\end{abstract}

\maketitle

%%%%%%%%%%%%%%%%%%%%%%%%%%%%%%%%
\section{Introduction}
\label{Introduction}
%%%%%%%%%%%%%%%%%%%%%%%%%%%%%%%%

Let $\HH$ be the Poincar\'e upper-half plane.
Let $\Gamma$ be a Fuchsian group which means 
a discrete subgroup of $\SL_2(\RR)$.
Let $\rho$ be a two-dimensional representation of $\Gamma$,
i.e., a homomorphism $ \rho : \Gamma \rightarrow \GL_2(\CC)$.
A $\CC$-valued meromorphic function $h$ on $\HH$ is called a $\rho$-equivariant function (for $\Gamma$)
if $$h(\gamma z)=\rho(\gamma)h(z)$$
for all $\gamma \in \Gamma$ and $z \in \HH$ except for the poles of $h$,
where both $\gamma$ and $\rho(\gamma)$ act on complex numbers by linear transformation.
The notion of $\rho$-equivariant functions can be naturally introduced also when $\rho$ is replaced with
any of homomorphisms $\rho : \bar\Gamma \rightarrow \GL_2(\CC)$, $\rho : \Gamma \rightarrow \PGL_2(\CC)$ and
$\rho : \bar \Gamma \rightarrow \PGL_2(\CC)$,
where $\bar \Gamma$ is the subgroup of $\PSL_2(\RR)$ corresponding to $\Gamma$.

The notion of $\rho$-equivariant functions
was introduced by Saber and Sebbar \cite{SS2014},
which is the same as covariant functions by Kaneko
and Yoshida \cite{KanekoYoshida}.
It is a generalization of automorphic functions, just as automorphic functions on a Fuchsian group $\Gamma$ are examples of $\rho$-equivariant functions when $\rho(\gamma)=I_2$ for all $\gamma \in \Gamma$,
where $I_2$ denotes the two-by-two unit matrix.
The notion of $\rho$-equivariant functions
also generalizes equivariant functions studied in
\cite{SebbarSebbar}, \cite{ES1} and \cite{ES2},
which are meromorphic functions $h$ on $\HH$ satisfying $h(\gamma z)=\gamma h(z)$ for all $\gamma \in \Gamma$ and $z \in \HH$ except for the poles of $h$.
As a remarkable fact,
$\rho$-equivariant functions
are related to (meromorphic) automorphic forms of weight $4$
via the Schwarzian derivative. Here the Schwarzian derivative $\{h,z\}$ of a non-constant meromorphic function $h$ on a complex domain is defined as
$$\{h, z\} = \left(\frac{h''}{h'}\right)'-\frac{1}{2}\left(\frac{h''}{h'}\right)^2.$$
Let
$h$ be a non-constant meromorphic function on $\HH$
and $\Gamma$ a Fuchsian group.
Then, it is known that
the Schwarzian derivative $\{h,z\}$
is an automorphic form of weight $4$ on $\Gamma$
if and only if
$h$ is $\rho$-equivariant for a two-dimensional projective representation $\rho$ of $\Gamma$
(\cite[Proposition 3.1]{SS2020})\footnote{In \cite[Proposition 3.1]{SS2020}, $h$ should be non-constant.
Moreover, $\rho$ should be a projective representation from $\Gamma$ to $\PGL_2(\CC)$.}.
A $\rho$-equivariant function has been studied in the view point of
the Schwarzian derivative and
automorphic Schwarzian equations (see \cite{SS2020},
\cite{SS2021hypergeom}, \cite{SS2021} and \cite{SS2022}).

In this note, we discuss the problem on the existence of $\rho$-equivariant functions $h$ for any two-dimensional representation $\rho$ of any Fuchsian group. This problem is concerned with the difference between ``$\SL_2(\RR)$, $\GL_2(\CC)$'' and ``$\PSL_2(\RR)$, $\PGL_2(\CC)$''.
Because of
$(\pm I_2)h=I_2$, the action of $-I_2$ to $h$ seems negligible at first glance.
However, we must take care of the difference between $\SL_2(\RR)$ and $\PSL_2(\RR)$ if $\rho$ is a projective representation (homomorphism from $\Gamma$ to $\PGL_2(\CC)$).
Such a projective $\rho$ is not lifted to a homomorphism from $\Gamma$ to $\GL_2(\CC)$ in general. Indeed, $\rho$ is lifted to a homomorphism from the central extension of $\Gamma$ to $\GL_2(\CC)$.
Therefore many problems occur when we use theorems for projective representations in order to prove some properties for usual representations.

Our result on the existence of $\rho$-equivariant functions 
is stated as follows.

\begin{thm}\label{exist of equiv}
Let $\tilde{\Gamma}$ be a Fuchsian group.
Assume that there exist
a representation $\rho_0 : \tilde \Gamma \rightarrow \GL_2(\CC)$ of $\tilde \Gamma$
such that $\rho_0(-I_2) = I_2$ if $-I_2\in \tilde\Gamma$.
Further we assume the existence of a $\rho_0$-equivariant function
$h_0$ such that $\{h_0, z\}$ is holomorphic on $\HH$.
Then, for any Fuchsian group $\Gamma$ contained in $\tilde\Gamma$ and any representation $\rho : \Gamma \rightarrow \GL_2(\CC)$ of $\Gamma$ such that $\rho(-I_2) \in \CC^\times I_2$ if $-I_2\in \Gamma$,
there exists a $\rho$-equivariant function for $\Gamma$.
\end{thm}

Remark that the condition $\rho(-I_2) \in \CC^\times I_2$ is natural
as we see $h(z)=h(-I_2z)=\rho(-I_2)h(z)$ for any non-constant $\rho$-equivariant functions $h$, from which $\rho(-I_2) \in \CC^\times I_2$ holds.
We also note that the condition $\rho(-I_2) \in \CC^\times I_2$ immediately gives us $\rho(-I_2)=\pm I_2$.

A special case of Theorem \ref{exist of equiv} was given as
\cite[Theorem 7.2]{SS2017}, where $\rho(-I_2)=I_2$ was
imposed\footnote{
	Remark that $\Gamma$ in
	\cite{SS2017}
	is a subgroup of $\PSL_2(\RR)$ but not of $\SL_2(\RR)$.}
when $-I_2 \in \Gamma$. The assumption $\rho(-I_2)=I_2$
was essentially used in \cite[Theorem 7.2]{SS2017} since $\rho$-equivariant functions in \cite{SS2017}
were constructed by non-zero $\CC^2$-valued automorphic forms of weight $0$
with multiplier system $\rho$, where we note that the weight $0$ condition gives us $\rho(-I_2)=I_2$.

We show one example of problems due to the identification of usual representations with projective representations.
The result \cite[Theorem 7.2]{SS2017} was used
for $\Gamma=\SL_2(\ZZ)$ in \cite[p.1626]{SS2020},
where
the authors of \cite{SS2020} stated that any projective representation $\bar \rho : \PSL_2(\ZZ)\rightarrow \PGL_2(\CC)$
becomes a lift induced from a representation $\rho : \SL_2(\ZZ)\rightarrow \GL_2(\CC)$
and that this follows from the existence of a $\bar\rho$-equivariant function.
However, the existence of $\bar\rho$-equivariant functions
does not follow from \cite[Theorem 7.2]{SS2017}
since $\bar \rho$ is a projective representation but not a representation and
their argument works only for any representations $\rho : \PSL_2(\ZZ) \rightarrow \GL_2(\CC)$
but not for projective representations $\bar \rho : \PSL_2(\ZZ)\rightarrow \PGL_2(\CC)$.

Besides,
it was stated in \cite[p.554]{SS2021} that
$\rho$-equivariant functions always exist for any Fuchsian group $\Gamma$ in $\SL_2(\RR)$ and any projective representation $\rho : \Gamma \rightarrow \PGL_2(\CC)$ of $\Gamma$.
This statement does not follow from 
\cite[Theorem 7.2]{SS2017}
since $\rho$ is not a representation of $\Gamma$ as explained above.

Contrary to the previous result \cite[Theorem 7.2]{SS2017} where $\rho(-I_2)=I_2$ was imposed when $-I_2 \in \Gamma$,
Theorem \ref{exist of equiv} holds for all representations $\rho : \Gamma \rightarrow \GL_2(\CC)$
even when $\rho(-I_2) = -I_2$ under the assumption of the existence of $h_0$.
In \cite[\S2]{SS2021hypergeom}, it was stated that
$\rho$-equivariant functions always exist for any Fuchsian group $\Gamma$ in $\SL_2(\RR)$ and any representation $\rho : \Gamma \rightarrow \GL_2(\CC)$ of $\Gamma$. Theorem \ref{exist of equiv} justifies this statement partially.

As a corollary of Theorem \ref{exist of equiv}, we obtain the following by applying Klein's elliptic modular function $\lambda$ as $h_0$.

\begin{cor}\label{cor for SL_2(Z)}
	Let $\Gamma$ be any Fuchsian group such that $\Gamma \subset \s\SL_2(\ZZ)\s^{-1}$ for some $\s \in \SL_2(\RR)$.
	Then,
	for any representation $\rho : \Gamma \rightarrow \GL_2(\CC)$ of $\Gamma$
	such that $\rho(-I_2) \in \CC^\times I_2$ if $-I_2\in \Gamma$,
	there exists a $\rho$-equivariant function for $\Gamma$.
\end{cor}

%%%%%%%%%%%%%%%%%%%%%%%%%%%%%
\section{Proof of Theorem}
%%%%%%%%%%%%%%%%%%%%%%%%%%%%%

For $k \in \ZZ$ and a representation $\rho : \Gamma\rightarrow \GL_2(\CC)$ of a Fuchsian group $\Gamma$,
we say a $\CC^2$-valued meromorphic function $F$ on $\HH$ to be a $\CC^2$-valued automorphic form of weight $k$ and multiplier system $\rho$
if $F$ satisfies
$$F(\gamma z)=(cz+d)^k\rho(\gamma)F(z)$$
for all $\gamma=(\begin{smallmatrix}
	a & b \\ c & d
\end{smallmatrix}) \in \Gamma$ and all $z \in \HH$ except for the poles of $F$.
We do not impose conditions at the cusps of $\Gamma$ as in \cite[\S2]{SS2020}.
If a $\CC^2$-valued automorphic form $F=(\begin{smallmatrix}
f_1 \\ f_2
\end{smallmatrix})$
of weight $k$ and multiplier system $\rho$ satisfies $f_2\neq 0$, then
we can check that $\frac{f_1}{f_2}$
is a $\rho$-equivariant function.

By using the Schwarzian derivative, Saber and Sebbar \cite[Theorem 4.4]{SS2014} proved that,
for any two-dimensional representation $\rho$ of %a Fuchsian group 
$\Gamma$
and any $\rho$-equivariant function $h$,
there exists a $\CC^2$-valued automorphic form
$F=(\begin{smallmatrix}
f_1 \\ f_2
\end{smallmatrix})$ of weight $-1$ and multiplier system $\rho$
such that $h=\frac{f_1}{f_2}$.
However, this statement is not true when $\rho(-I_2)=I_2$ since
there exist no non-zero $\CC^2$-valued automorphic forms of weight $-1$ and multiplier system $\rho$ in that case.

Furthermore, $h$ in \cite[Theorem 4.4]{SS2014} should be non-constant since the Schwarzian derivative of $h$ is used
in the proof.
If $h$ is constant, then the constant is a solution to
the equations $c z^2+(d-a)z-b=0$ for all $[\begin{smallmatrix}
	a & b \\ c & d \end{smallmatrix}] \in \Im\rho$.
This situation can happen
when $\Im\rho \subset \{\pm \delta^n \mid n \in \ZZ\}$ for some $\delta \in \GL_2(\CC)$, etc.
We modify \cite[Theorem 4.4]{SS2014} as follows.
\begin{prop}\label{exist of aut-1}
Let $\rho: \Gamma \rightarrow \GL_2(\CC)$
be a representation of a Fuchsian group $\Gamma$.
Let $h$ be a non-constant $\rho$-equivariant function such that
$\{h,z\}$ is holomorphic on $\HH$.
Then, there exists a representation $\rho' : \Gamma \rightarrow \GL_2(\CC)$
and a $\CC^2$-valued automorphic form $F=(\begin{smallmatrix}
f_1\\f_2
\end{smallmatrix})$ of weight $-1$ and multiplier system $\rho'$ such that
$f_1$ and $f_2$ are linearly independent and $h=\frac{f_1}{f_2}$.
In particular, $\rho$ equals $\chi\rho'$ for some character $\chi$ of $\Gamma$.
\end{prop}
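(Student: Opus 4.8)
The plan is to follow the Schwarzian-derivative strategy of \cite[Theorem 4.4]{SS2014}, correcting it at the two points noted above: one needs $h$ non-constant so that its Schwarzian is available, and one needs control of $h$ at its poles and critical points in order to keep the relevant solutions single-valued on $\HH$. First I would record the transformation laws forced by $\rho$-equivariance. Writing $\rho(\gamma)=\left(\begin{smallmatrix} a_\gamma & b_\gamma \\ c_\gamma & d_\gamma\end{smallmatrix}\right)$ and differentiating $h(\gamma z)=\frac{a_\gamma h(z)+b_\gamma}{c_\gamma h(z)+d_\gamma}$ gives
$$
h'(\gamma z)=(cz+d)^{2}\,\frac{\det\rho(\gamma)}{(c_\gamma h(z)+d_\gamma)^{2}}\,h'(z)\qquad\Bigl(\gamma=\left(\begin{smallmatrix} a&b\\ c&d\end{smallmatrix}\right)\in\Gamma\Bigr),
$$
while the chain rule for the Schwarzian together with $\{M,w\}=0$ for M\"obius $M$ shows that $\{h,z\}$ is a meromorphic automorphic form of weight $4$ on $\Gamma$, i.e.\ $\{h,\gamma z\}=(cz+d)^{4}\{h,z\}$. (This is exactly where non-constancy of $h$ enters.)

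Next, the construction of $F$. Locally on $\HH$ one has $h=u_{1}/u_{2}$ with $u_{2}=(h')^{-1/2}$ and $u_{1}=h\,u_{2}$, and these span the solution space of $y''+\tfrac12\{h,z\}y=0$, with constant Wronskian. If $h$ is locally univalent then, $\HH$ being simply connected, these $u_{i}$ are single-valued; and the hypothesis that every pole of $h$ is simple is precisely what makes $u_{2}=(h')^{-1/2}$ meromorphic across the poles of $h$ (where it acquires a simple zero), so in that case one may take $f_{i}=u_{i}$. In general $h$ may have critical points, and around those at which $h'$ vanishes to odd order the $u_{i}$ are branched. To repair this I would first check, using the displayed law for $h'$ and the simple-pole hypothesis (which forces a critical point never to be mapped by $\Gamma$ onto a pole of $h$), that the critical points of $h$ fall into $\Gamma$-orbits on which $\ord(h')$ is constant; hence the parity of $\div(h')$ is $\Gamma$-invariant, and I would then choose a meromorphic automorphic form $\varphi$ of weight $0$ on $\Gamma$ (with some multiplier $\nu$) whose divisor has the same parity as $\div(h')$, and set $f_{2}:=(-\varphi/h')^{1/2}$, $f_{1}:=h\,f_{2}$. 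These are single-valued and meromorphic on $\HH$, and $f_{1}f_{2}'-f_{1}'f_{2}=-h'f_{2}^{2}$ is not identically zero (as $h$ is non-constant and $f_{2}\not\equiv0$), so $f_{1},f_{2}$ are linearly independent, with $h=f_{1}/f_{2}$. The existence of such a $\varphi$ — equivalently, a single-valued meromorphic solution on all of $\HH$ of the twisted second-order equation — is the crux; it is the content of Proposition~\ref{global solution} (and is trivial when $h$ is locally univalent).

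Finally I would extract $\rho'$ and $\chi$. Substituting the displayed law for $h'$ and $\varphi(\gamma z)=\nu(\gamma)\varphi(z)$ into the definition of $f_{2}$ gives
$$
f_{2}(\gamma z)^{2}=\frac{\nu(\gamma)}{\det\rho(\gamma)}\,(cz+d)^{-2}\,\bigl(c_\gamma f_{1}(z)+d_\gamma f_{2}(z)\bigr)^{2},
$$
and since $f_{2}(\gamma z)(cz+d)/\bigl(c_\gamma f_{1}(z)+d_\gamma f_{2}(z)\bigr)$ is a meromorphic function of $z$ whose square is a constant, it equals a constant $\epsilon(\gamma)\in\CC^{\times}$ with $\epsilon(\gamma)^{2}=\nu(\gamma)/\det\rho(\gamma)$; then $f_{1}(\gamma z)=h(\gamma z)f_{2}(\gamma z)=\epsilon(\gamma)(cz+d)^{-1}\bigl(a_\gamma f_{1}(z)+b_\gamma f_{2}(z)\bigr)$. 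Hence $F:=\left(\begin{smallmatrix} f_{1}\\ f_{2}\end{smallmatrix}\right)$ obeys $F(\gamma z)=(cz+d)^{-1}\rho'(\gamma)F(z)$ with $\rho'(\gamma):=\epsilon(\gamma)\rho(\gamma)$. Comparing $F\bigl((\gamma_{1}\gamma_{2})z\bigr)$ with $F\bigl(\gamma_{1}(\gamma_{2}z)\bigr)$ via the cocycle identity for $(cz+d)$, and using that $\{F(z):z\in\HH\}$ spans $\CC^{2}$ (because $f_{1},f_{2}$ are linearly independent), forces $\rho'(\gamma_{1}\gamma_{2})=\rho'(\gamma_{1})\rho'(\gamma_{2})$. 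Thus $\rho'$ is a representation, $\epsilon=\rho'\rho^{-1}$ is a character of $\Gamma$, $F$ is a $\CC^{2}$-valued automorphic form of weight $-1$ and multiplier system $\rho'$ with $h=f_{1}/f_{2}$, and $\rho=\chi\rho'$ with $\chi:=\epsilon^{-1}$. The main obstacle is the single-valuedness step in the second paragraph; the rest is bookkeeping with the two transformation laws.
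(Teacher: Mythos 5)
Your overall route is the same as the paper's: form $g=\{h,z\}$, build $f_2$ as (a multiple of) $(h')^{-1/2}$ and $f_1=hf_2$, and read the multiplier system $\rho'$ off the transformation law of $h'$. Your endgame is in fact a bit more self-contained than the paper's: you derive $\rho'(\gamma)=\epsilon(\gamma)\rho(\gamma)$ directly from the transformation law of $h'$ and get multiplicativity from the cocycle identity, whereas the paper cites \cite[Corollary 4.3]{SS2014} for the existence of $\rho'$ and then recovers $\rho=\chi\rho'$ from the fact that a non-constant $h$ takes at least three values, so that $\rho(\gamma)$ and $\rho'(\gamma)$ agree as M\"obius transformations. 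Either version of that step is fine.

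The divergence, and the gap, is in the single-valuedness of $\sqrt{h'}$. The paper's proof asserts that $g=\{h,z\}$ is holomorphic on $\HH$ (quoting the simple-pole computation \eqref{explicit}) and then applies Proposition~\ref{global solution}; in that proposition the holomorphy of $g$ is used at the outset to prove that $h'$ is nowhere vanishing, so on the paper's route there are no critical points and $G=\sqrt{h'}$ is constructed globally by integrating $\tfrac12 h''/h'$ (the only residues are the $-2$'s at the double poles of $h'$ over the simple poles of $h$). You instead allow $h$ to have critical points and propose to absorb the odd-order ones by twisting with a weight-$0$ meromorphic form $\varphi$ with $\div(\varphi)\equiv\div(h')\pmod 2$. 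Your preparatory observations are correct (a critical point cannot be sent by $\Gamma$ to a pole because a simple pole would become a higher-order one, and $\ord(h')$ is constant on orbits), but the existence of such a $\varphi$ is exactly the hard point, and you neither prove it nor can you defer it to Proposition~\ref{global solution}: that proposition \emph{assumes} $\{h,z\}$ is holomorphic, which already forces $h'$ to be zero-free, so it says nothing about the branched case. As written, your proof therefore has an unproven step at its own declared crux. If you add the hypothesis under which the paper actually operates --- that $\{h,z\}$ is holomorphic on $\HH$, equivalently that $h$ is locally injective in addition to having only simple poles --- your argument closes up and coincides with the paper's; otherwise you must either supply the existence of $\varphi$ or show that the critical-point case cannot occur in the applications.
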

For the proof of Proposition \ref{exist of aut-1}, we correct 
\cite[Theorem 3.3]{SS2014}
as follows.
\begin{prop}\label{global solution}
Let $D$ be a simply connected domain in $\CC$.
Let $h$ be a non-constant meromorphic function on $D$.
Assume that $g(z):=\{h, z\}$ is holomorphic on $D$.
Then, a square root $\sqrt{h'}$ of $h'$ is defined as a meromorphic function on $D$.
Moreover, $y''+\frac{1}{2}gy=0$ has two linearly independent holomorphic solutions on $D$ given by
$f_1=\frac{h}{\sqrt{h'}}$ and $f_2=\frac{1}{\sqrt{h'}}$.
\end{prop}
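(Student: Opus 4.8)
The plan is to run the classical dictionary between Schwarzian equations and ratios of solutions of a second order linear ODE, being careful about the one step where \cite[Theorem 3.3]{SS2014} needs correcting, namely the global existence of $\sqrt{h'}$ as a meromorphic function. \emph{Step 1 (local behaviour of $h$).} First I would show that holomorphy of $g=\{h,z\}$ on $D$ forces $h$, regarded as a meromorphic map $D\to\PP^1(\CC)$, to be unramified at every point of $D$; equivalently, $h'$ has no zeros on $D$ and $h$ has only simple poles. At a point $z_0$ where $h$ is holomorphic, write $h(z)-h(z_0)=(z-z_0)^k u(z)$ with $k\ge 1$ and $u(z_0)\ne 0$; using $\{ah+b,z\}=\{h,z\}$, a direct computation shows that the principal part of $\{h,z\}$ at $z_0$ begins with $\frac{1-k^2}{2(z-z_0)^2}$, so holomorphy at $z_0$ forces $k=1$, i.e.\ $h'(z_0)\ne 0$. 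At a pole of $h$ of order $m$, the Möbius invariance $\{1/h,z\}=\{h,z\}$ reduces the claim to the previous case applied to $1/h$, which is holomorphic at $z_0$ with a zero of order $m$; hence $m=1$. Therefore $\ord_{z_0}(h')=0$ if $h$ is holomorphic at $z_0$ and $\ord_{z_0}(h')=-2$ if $h$ has a pole at $z_0$; in particular $\ord_{z_0}(h')$ is even for every $z_0\in D$.

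\emph{Step 2 (the square root).} Using Step 1 I would construct $\sqrt{h'}$ as follows. The logarithmic derivative $h''/h'$ is meromorphic on $D$ with only simple poles, located at the poles $\{p_j\}$ of $h$, each with residue $\ord_{p_j}(h')=-2$; hence $\tfrac12(h''/h')$ has simple poles with residues in $\ZZ$. Fix $z_*\in D\setminus\{p_j\}$. Since $D$ is simply connected, the function $G(z):=\exp\!\big(\int_{z_*}^{z}\tfrac12(h''/h')\,dw\big)$ is well defined and single valued on $D\setminus\{p_j\}$ (every period of $\tfrac12(h''/h')$ lies in $2\pi i\,\ZZ$, because $H_1$ of the punctured domain is generated by small loops around the $p_j$), and it extends meromorphically across each $p_j$ with a simple pole there. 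From $(G^2)'/G^2=h''/h'=(h')'/h'$ one gets that $G^2/h'$ has vanishing logarithmic derivative, hence equals a nonzero constant $c$; then $\sqrt{h'}:=G/\sqrt{c}$ is one of the two meromorphic square roots of $h'$ on $D$, and the assertions below hold for either choice.

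\emph{Step 3 (conclusion).} Set $f_1=h/\sqrt{h'}$ and $f_2=1/\sqrt{h'}$. Where $h$ is holomorphic, $\sqrt{h'}$ is holomorphic and nonvanishing by Step 1, so $f_1,f_2$ are holomorphic there; at a simple pole $z_0$ of $h$, $\sqrt{h'}$ has a simple pole, so $f_2$ has a simple zero and $f_1$ a finite value at $z_0$; hence $f_1,f_2$ are holomorphic on all of $D$. A routine differentiation, using $g=h'''/h'-\tfrac32(h''/h')^2$, gives $f_2''+\tfrac12 g f_2=0$; and then $f_1''+\tfrac12 g f_1=h''f_2+2h'f_2'=h''f_2-h''f_2=0$, so $f_1$ and $f_2$ are holomorphic solutions of $y''+\tfrac12 gy=0$. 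Since $h$ is non-constant we have $f_2\not\equiv 0$ and $f_1/f_2=h$ non-constant, so $f_1,f_2$ are linearly independent; this also yields the last assertion $h=f_1/f_2$.

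\emph{Main obstacle.} The delicate point, and the reason \cite[Theorem 3.3]{SS2014} must be corrected, is that a meromorphic function admits a global meromorphic square root on a simply connected domain precisely when all of its zeros and poles have even order; it is exactly the hypothesis that $\{h,z\}$ be holomorphic on $D$ that guarantees this for $h'$, by ruling out ramification of $h$ and higher order poles of $h$ (Step 1). Once that is in place, the construction of $\sqrt{h'}$ and the verification of the ODE are the classical computations.
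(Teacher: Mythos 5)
Your proposal is correct and follows essentially the same route as the paper: use holomorphy of the Schwarzian to show $h'$ is nonvanishing and all poles of $h$ are simple (so $h'$ has only even-order zeros/poles), construct $\sqrt{h'}$ by exponentiating $\tfrac12\int h''/h'$ on the simply connected domain using that the residues are $-2$, and then verify the ODE for $f_1=h/\sqrt{h'}$ and $f_2=1/\sqrt{h'}$ by direct computation. The only cosmetic differences are that you handle the pole case via M\"obius invariance $\{1/h,z\}=\{h,z\}$ where the paper computes the principal part explicitly, and you streamline the check for $f_1$ by writing $f_1=hf_2$.
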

\begin{proof}
In the proof of \cite[Theorem 3.3]{SS2014},
the patching of local solutions $(K_i, L_i)$ on $U_i$ is not justified
since the equality $\a_i\a_j^{-1}=\a_W$ is not true.
This equality should be $\a_i\a_j^{-1}=\lambda_{ij}\a_W$ for some $\lambda_{ij}\in\CC^\times$. Thus (3.2) in \cite[Theorem 3.3]{SS2014} is not true. Moreover, the case where $D=\CC-\{0\}$ and $h=-\frac{1}{2z^2}$ is a counterexample of \cite[Theorem 3.3]{SS2014}.
In that case, we have $g=\{h,z\}=-\frac{3}{2z^2}$  and two fixed branches $z^{-1/2}$ and $z^{3/2}$ are linearly independent local solutions of $y''+\frac{1}{2}gy=0$ on a simply connected domain in $\CC-\{0\}$. These solutions are not analytically continued to $\CC-\{0\}$.

For the proof of the assertion, we refer to \cite[Theorem 3.3 (2)]{SS2020} on the explicit formula of two linearly independent solutions on $\HH$.
However, the proof of \cite[Theorem 3.3 (2)]{SS2020} should be also corrected since the meromorphy of $\sqrt{h'}$ is not proved by merely taking the principal branch of the square root. We need to prove that the orders of all poles of $h'$ are even.
We correct the proof of \cite[Theorem 3.3 (2)]{SS2020} as follows.

First we prove that $h'$ is non-vanishing everywhere on $D$.
If $h'(z_0)=0$ holds at some $z_0\in D$, then $\{h, z\}$ has a double pole at $z_0$. Indeed, if we put $h'(z)=(z-z_0)^n p(z)$ for a function $p$ with $p(z_0)\neq 0$ and $n\ge1$, we have
\begin{align}\label{explicit0}
\{h, z\}= -\frac{n(n+2)}{2(z-z_0)^2}-\frac{np'(z)}{(z-z_0)p(z)}+\frac{2p(z)p''(z)-3p'(z)^2}{2p(z)^2}
\end{align}
by a direct computation 
(cf.\ \cite[pp.38--39]{Yasukawa}). This contradicts the holomorphy of $\{h, z\}$.

Next we prove that every point in $D$ is a regular point or a simple pole of $h$.
If $z_0 \in D$ is a pole of $h$ of order $n\ge 2$,
then $\{h,z\}$ has a double pole at $z_0$. Indeed,
$1/h$ has a zero of order $n$ at $z_0$.
When $(1/h)'=(z-z_0)^{n-1}p(z)$ for a function $p$ with $p(z_0)\neq 0$, the same computation as \eqref{explicit0} leads us to \begin{align}\label{explicit}
\{h, z\}=\{1/h, z\}=-\frac{(n-1)(n+1)}{2(z-z_0)^2}-\frac{(n-1)p'(z)}{(z-z_0)p(z)}+\frac{2p(z)p''(z)-3p'(z)^2}{2p(z)^2}.
\end{align}
Hence $z_0$ is a double pole of $\{h, z\}$.
This contradicts the holomorphy of $\{h,z\}$.
We remark that \eqref{explicit} is valid for $n=1$. Therefore $h$ may have a simple pole since
$\{h,z\}$ is holomorphic at $z_0$ when $n=1$
by \eqref{explicit}.

For introducing $\sqrt{h'}$,
we use an elementary method of complex analysis (cf.\ \cite[Lemma 3.7]{Yasukawa}).
Fix a regular point $z_0 \in D$ of $h$ (or equivalently, of $h'$) and define a function $G$
by $$G(z):=\sqrt{h'(z_0)}\exp\left({\frac{1}{2}\int_{L_z}\frac{h''(\zeta)}{h'(\zeta)}d\zeta}\right)$$
for $z \in D-P_{h}$, where $P_h$ is the set of the poles of $h$, 
$\sqrt{h'(z_0)}$ is a fixed square root of $h'(z_0)$,
and $L_z$ is a fixed
smooth Jordan curve from $z_0$ to $z$ not passing through the poles of $h$.
Then $G(z)$ is independent of the choice of $L_z$.
Indeed,
when $L'_z$ is another smooth Jordan curve with the same property as $L_z$,
the argument principle gives us
$$\int_{L_z}\frac{h''(\zeta)}{h'(\zeta)}d\zeta -\int_{L_z'}\frac{h''(\zeta)}{h'(\zeta)}d\zeta=\pm2\pi\sqrt{-1}\sum_{a}\Res_{\zeta=a}\frac{h''(\zeta)}{h'(\zeta)}
=\pm2\pi\sqrt{-1}\sum_{a}(-2),$$
where $a$ runs over all poles of $h'$ in the bounded domain whose boundary is $L_z \cup L_z'$.
Here we use the assumption that $D$ is simply connected and the formula $\Res_{\zeta=a}\frac{h''(\zeta)}{h'(\zeta)}=-2$
since any singular point of $h'$ is its double pole.
Thus $G(z)$ is well-defined.

If we set $\varphi=G^2$, we can check $\varphi(z_0)=h'(z_0)$ and $\varphi'=2GG'=\varphi\frac{h''}{h'}$.
Hence we obtain $\varphi=h'$, i.e., $G^2=h'$.
Moreover, $G$ is a meromorphic function on $D$ which is regular at every regular point of $h$, and every pole of $h$ is a simple pole of $G$.
%Indeed, when $a\in P_h$, then $a$ is a double pole of $h'$ and hence $(z-a)^2G(z)^2=(z-a)^2h'(z)$ is holomorphic at $a$, which implies that $a$ is a simple pole of $G$.
By the consideration so far,
the proof of all desired properties of $G$ is completed.

Finally, $f_1:=\frac{h}{G}$ and $f_2:=\frac{1}{G}$
are holomorphic on $D$ with the aid of the properties of $G$.
Furthermore, the linear independence of $f_1$ and $f_2$ is clear since $h$ is non-constant.
By using
$\varphi'= \varphi\frac{h''}{h'}$, $G'=\frac{\varphi}{2G}\frac{h''}{h'}=\frac{G}{2}\frac{h''}{h'}$
and
$G''=\frac{G}{2}(\frac{h''}{h'})'+\frac{G'}{2}\frac{h''}{h'}
=\frac{G}{2}(\frac{h''}{h'})'+\frac{G}{4}(\frac{h''}{h'})^2$,
we obtain $f_1''=-\frac{1}{2}gf_1$
and $f_2''=-\frac{1}{2}gf_2$.
\end{proof}

\begin{proof}[Proof of Proposition \ref{exist of aut-1}] We prove the assertion by correcting the argument in \cite[Theorem 4.4]{SS2014}.
For a given non-constant $\rho$-equivariant function $h$,
set $g=\{h,z\}$.
Then $g$ is an automorphic form of weight $4$ on $\Gamma$.
Note that $g$ is holomorphic on $\HH$ by the assumption.
By Proposition \ref{global solution},
the differential equation $y''+\frac{1}{2}gy=0$ has two linearly independent
holomorphic solutions $f_1$ and $f_2$ on $\HH$ such that $h=\frac{f_1}{f_2}$.

By \cite[Corollary 4.3]{SS2014},
the function $F
:=(\begin{smallmatrix}
	f_1\\f_2
\end{smallmatrix})$
is a $\CC^2$-valued automorphic form of weight $-1$ and multiplier system $\rho'$,
where $\rho' : \Gamma \rightarrow \GL_2(\CC)$ is the representation of $\Gamma$ given by
$$\left(\begin{smallmatrix}
	(cz+d)f_1(\gamma z)\\(cz+d)f_2(\gamma z)
\end{smallmatrix}\right)=\rho'(\gamma)\left(\begin{smallmatrix}
f_1(z)\\f_2(z)
\end{smallmatrix}\right), \qquad z \in \HH, \ \gamma \in \Gamma$$
(cf.\ \cite[Corollary 4.3]{SS2014}). Note $\rho'(-I_2)=-I_2$ by definition.
%Finally, we prove the existence of $\chi$ such that $\rho=\chi\rho'$.
Fix any $\gamma \in \Gamma$. Since $h=\frac{f_1}{f_2}$ is both $\rho$-equivariant and $\rho'$-equivariant, we have
$\rho(\gamma)h(z)=\rho'(\gamma)h(z)$ for any $\gamma$ and any $z$.
As $h$ is non-constant and meromorphic,
$h$ takes three distinct values and hence $\rho(\gamma)$ equals $\rho'(\gamma)$ as a linear transformation.
Thus there exists $\chi(\gamma) \in \CC^\times$ such that
$\rho(\gamma)=\chi(\gamma)\rho'(\gamma)$.
We can check easily that $\chi$ is a character of $\Gamma$.
\end{proof}

\medskip
By using a sheaf cohomology,
we can show the existence of $\CC^2$-valued automorphic forms of weight $0$ by \cite[Theorem 6.2]{SS2017}, where the group $\Gamma$ in \cite[Theorem 6.2]{SS2017} is a subgroup of $\PSL_2(\RR)$ but not of $\SL_2(\RR)$.
By noting this, we have the following.
\begin{thm}[Theorem 7.2 in \cite{SS2017}]\label{exist of auto0}
Let $\Gamma$ be a Fuchsian group in $\SL_2(\RR)$
and
$\rho : \Gamma \rightarrow \GL_2(\CC)$ a representation of $\Gamma$
such that $\rho(-I_2)=I_2$ if $-I_2 \in \Gamma$.
Then there exists a $\rho$-equivariant function.
\end{thm}

\begin{prop}\label{exist of chi}
	Let $\Gamma$ be a Fuchsian group containing $-I_2$.
	Assume the existence of a representation 
$\rho_0 : \Gamma \rightarrow \GL_2(\CC)$ such that $\rho_0(-I_2)=I_2$.
We also assume the existence of a $\rho_0$-equivariant function
	$h_0$ such that $\{h_0, z\}$ is holomorphic on $\HH$.
	Then there exists a character $\chi$ of $\Gamma$
	such that $\chi(-I_2)=-1$.
\end{prop}
\begin{proof}
By Proposition \ref{exist of aut-1} for $h_0$, there exists a representation $\rho' : \Gamma \rightarrow \GL_2(\CC)$ of $\Gamma$
	and a $\CC^2$-valued automorphic form $F=(\begin{smallmatrix}f_1 \\ f_2\end{smallmatrix})$ of weight $-1$ and multiplier system $\rho'$
	such that $h_{0}=\frac{f_1}{f_2}$.
In particular, we have $\rho_0 = \chi \rho'$ for some character $\chi$ of $\Gamma$.
Here we can take $\rho'$ such that $\rho'(-I_2)=-I_2$ by the construction of $\rho'$ in the proof of Proposition \ref{exist of aut-1}. Hence we obtain $\chi(-I_2)I_2=\chi(-I_2)\rho_0(-I_2)=\rho'(-I_2)=-I_2$. This completes the proof.
\end{proof}

\begin{proof}[{Proof of Theorem \ref{exist of equiv}}]
We may assume $-I_2 \in \tilde\Gamma$ and $\rho(-I_2)=-I_2$, by Theorem
\ref{exist of auto0}.
Then we take a character $\chi$
of $\tilde\Gamma$ such that $\chi(-I_2)=-1$ by Proposition \ref{exist of chi}.
The restriction of $\chi$ to $\Gamma$ is denoted by $\chi_\Gamma$.
Then $\chi_{\Gamma}\,\rho$ satisfies $\chi_{\Gamma}\,\rho(-I_2)=I_2$,
which leads us to
the existence of
a $\chi_{\Gamma}\,\rho$-equivariant function
by Theorem \ref{exist of auto0}.
This function is also $\rho$-equivariant.
\end{proof}

\begin{proof}
[{Proof of Corollary \ref{cor for SL_2(Z)}}]
Klein's elliptic modular function $\lambda$ is a Hauptmodul
for $\Gamma(2)$, where $\Gamma(2)$ is the principal congruence subgroup of level $2$.
By \cite[\S 6]{SS2020}, $\lambda$ is a $\rho_0$-equivariant function
for $\SL_2(\ZZ)$. Here $\rho_0$ is a two-dimensional representation of $\SL_2(\ZZ)$ given by
$\rho_0(\begin{smallmatrix}
1&1\\0&1
\end{smallmatrix})=(\begin{smallmatrix}
1&0\\1&-1
\end{smallmatrix})$ and $
\rho_0(\begin{smallmatrix}
	0&-1\\1&0
\end{smallmatrix}) = (\begin{smallmatrix}
-1&1\\0&1
\end{smallmatrix}).$
We remark
$\rho_0(-I_2)=\rho_0(\begin{smallmatrix}
	0&-1\\1&0
\end{smallmatrix})^2=(\begin{smallmatrix}
-1&1\\0&1
\end{smallmatrix})^2=I_2.$
Moreover the equality $\{\lambda,z\}=\frac{\pi^2}{2}E_4$ holds,
where $E_4$ is the Eisenstein series of weight $4$ and level $1$ (see \cite[Proposition 5.2]{McKaySebbar}\footnote{The Schwarzian derivative in \cite{McKaySebbar} is the twice of ours.}).
By Theorem \ref{exist of equiv} for $\tilde \Gamma=\SL_2(\ZZ)$ and $h_0=\lambda$, we obtain the corollary.
\end{proof}

	If $\tilde\Gamma$ is a Fuchsian group of the first kind and of genus $0$
	with no elliptic elements, then a Hauptmodul $h_0$ for $\tilde\Gamma$
	is locally univalent on $\HH$ and thus $\{h_0, z\}$
	is holomorphic on $\HH$ (cf.\ \cite[Proposition 6.1]{McKaySebbar}).
	Explicit examples of the Schwarzian derivatives of Hauptmoduln
	are treated for $\Gamma_0(N)$ in \cite{McKaySebbar}
	and for $\Gamma(N)$ in \cite{SS2020}.

\begin{rem}\label{grp theory}
Let $\Gamma$ be a Fuchsian group containing $-I_2$ and
let $[\Gamma, \Gamma]$ be the commutator subgroup of $\Gamma$.
If $\Gamma$
is assumed to satisfy $-I_2 \notin [\Gamma, \Gamma]$,
then we can prove the existence of a character $\chi : \Gamma \rightarrow \CC^\times$ such that $\chi(-I_2)=-1$ group-theoretically.
Indeed, the subgroup $H$ of $\Gamma /[\Gamma, \Gamma]$
generated by $-I_2[\Gamma,\Gamma]$ is of order two.
Thus we can take a non-trivial character $\chi_0$ of $H$.
By the Pontrjagin duality, $\chi_0$ is lifted to a character $\chi$ of $\Gamma/[\Gamma,\Gamma]$, which is regarded as a character of $\Gamma$.
As $\chi_0$ is non-trivial, we have $\chi(-I_2)=-1$.

We can verify $-I_2 \notin [\SL_2(\ZZ), \SL_2(\ZZ)]$ by \cite[Theorem 1.3.1]{Rankin}.
The case of $\Gamma=\s\SL_2(\ZZ)\s^{-1}$ for some $\s \in \SL_2(\RR)$ is similarly treated.
\end{rem}

%%%%%%%%%%%%%%%%%%%%%%%%%%%%%%%
\section*{Acknowledgements}
%%%%%%%%%%%%%%%%%%%%%%%%%%%%%%%
The author was supported by
Grant-in-Aid for Young Scientists (20K14298).

%%%%%%%%%%%%%%%%%%%%%%%%%%%%%%%%%%%%%%%%%%%%%%%%%%%%%%%%%%%%%%%%%%%%%%%%%%%%%%%%%%%%%%%%%%%%%%%%%%%%%%%%%%%%%%%%%%%%%%%%%%%%%%%%%%%%%%%%%%%%%%%

\end{document}